\documentclass[a4paper,10pt]{amsart}
\usepackage[centertags]{amsmath}
\usepackage[english]{babel}
\usepackage{amsfonts}
\usepackage{amssymb}
\usepackage{amsthm}
\usepackage[usenames,dvipsnames]{color}
\usepackage{newlfont}
\usepackage{graphicx}
\usepackage{epstopdf}
\usepackage{cite}
\usepackage{bm}
\usepackage{blkarray}
\usepackage{multirow}
\usepackage{multicol}
\usepackage[all,cmtip]{xy}
\usepackage{xypic}
\usepackage{multirow}
\usepackage{longtable}

\usepackage{enumerate}

\vfuzz2pt 
\hfuzz2pt 

\newcommand{\xdownarrow}[1]{%
    {\left\downarrow\vbox to #1{}\right.\kern-\nulldelimiterspace}}

\newcommand{\Z}{\mathbb{Z}}

\newcommand{\Q}{\mathbb{Q}}

\newcommand{\bq }{\begin{equation}}
\newcommand{\eq }{\end{equation}}
\theoremstyle{plain}
\newtheorem{thm}{Theorem}[section]
\newtheorem{lem}[thm]{Lemma}

\newtheorem{cor}[thm]{Corollary}
\newtheorem{rem}[thm]{Remark}

\theoremstyle{definition}
\newtheorem{defn}[thm]{Definition}

\theoremstyle{example}

\hoffset-2cm \voffset-2.5cm
\setlength{\arraycolsep}{2pt}\textwidth17cm \textheight24cm
\setcounter{page}{1} \setlength{\arraycolsep}{2pt}
\setlength{\baselineskip}{50pt}

\title{Plane model-fields of definition, fields of definition, the field of moduli of smooth plane curves}

\author[E. Badr] {Eslam Badr}
\address{$\bullet$\,\,Eslam Essam Ebrahim Farag Badr}
\address{Departament Matem\`atiques, Edif. C, Universitat Aut\`onoma de Barcelona\\
08193 Bellaterra, Catalonia, Spain} \email{eslam@mat.uab.cat}
\address{Department of Mathematics,
Faculty of Science, Cairo University, Giza-Egypt}
\email{eslam@sci.cu.edu.eg}

\author[F. Bars] {Francesc Bars}
\address{$\bullet$\,\,Francesc Bars Cortina}
\address{Departament Matem\`atiques, Edif. C, Universitat Aut\`onoma de Barcelona\\
08193 Bellaterra, Catalonia} \email{francesc@mat.uab.cat}

%
%
\thanks{E. Badr and F. Bars are supported by MTM2016-75980-P}
%

\begin{document}

\maketitle

\begin{abstract}
Given a smooth plane curve $\overline{C}$ of genus $g\geq 3$ over an
algebraically closed field $\overline{k}$, a field
$L\subseteq\overline{k}$ is said to be a \emph{plane model-field of
definition for $\overline{C}$} if $L$ is a field of definition for
$\overline{C}$, i.e. $\exists$ a smooth curve $C'$ defined over $L$
where $C'\times_L\overline{k}\cong \overline{C}$, and such that $C'$
is $L$-isomorphic to a non-singular plane model $F(X,Y,Z)=0$ in
$\mathbb{P}^2_{L}$.

{In this short note, we construct a smooth plane curve
$\overline{C}$ over $\overline{\mathbb{Q}}$, such that the field of
moduli of $\overline{C}$ is not a field of definition for
$\overline{C}$, and also fields of definition do not coincide with
plane model-fields of definition for $\overline{C}$.} As far as we
know, this is the first example in the literature with the above
property, since this phenomenon does not occur for hyperelliptic
curves, replacing plane model-fields of definition with the
so-called hyperelliptic model-fields of definition.
\end{abstract}

\section{Introduction}
Consider $F$ the base field for an algebraically closed field
$\overline{k}$. Let $F\subseteq L\subseteq \overline{k}$ be fields,
given a smooth projective curve $\overline{C}$ over $\overline{k}$,
then $\overline{C}$ is \emph{defined} over $L$ if and only if there
is a curve $C'$ over $L$ that is $\overline{k}$-isomorphic to
$\overline{C}$, i.e. $C'\times_L\overline{k}\cong \overline{C}$. In
such case, $L$ is called a \emph{field of definition} of
$\overline{C}$. We say that $\overline{C}$ is \emph{definable} over
$L$ if there is a curve $C'/L$ such that $\overline{C}$ and
$C'\times_L\overline{k}$ are $\overline{k}$-isomorphic.
\begin{defn}
The \emph{field of moduli} of a smooth projective curve
$\overline{C}$ defined over $\overline{k}$, denoted by
$K_{\overline{C}}$, is the intersection of all fields of definition
of $\overline{C}$.
\end{defn}
It becomes very natural to ask when the field of moduli of a smooth
projective curve $\overline{C}$ is also a field of definition. A
necessary and sufficient condition (Weil's cocycle criterion of
descent) for the field of moduli to be a field of definition was
provided by Weil \cite{We}. If $\operatorname{Aut}(\overline{C})$ is
trivial, then this condition becomes trivially true and so the field
of moduli needs to be a field of definition. It is also quite well
known that a smooth curve $\overline{C}$ of genus $g=0$ or $1$ can
be defined over its field of moduli, where $g$ is the geometric
genus of $\overline{C}$. However, if $g>1$ and
$\operatorname{Aut}(\overline{C})$ is non-trivial, then Weil's
conditions are difficult to be checked and so there is no guarantee
that the field of moduli is a field of definition for
$\overline{C}$. This was first pointed out by Earle \cite{earle} and
Shimura \cite{shimura}. More precisely, in page 177 of
\cite{shimura}, the first examples not definable over their field of
moduli are introduced, which are hyperelliptic curves over
$\mathbb{C}$ with two automorphisms. There are also examples of
non-hyperelliptic curves not definable over their field of moduli
given in \cite{hidalgo1, Hug}. B. Huggins \cite{Hug2} studied this
problem for hyperelliptic curves over a field $\overline{k}$ of
characteristic $p\neq2$, proving that a hyperelliptic curve
$\overline{C}$ of genus $g\geq2$ with hyperelliptic involution
$\iota$ can be defined over $K_{\overline{C}}$ when
$\operatorname{Aut}(\overline{C})/\langle\iota\rangle$ is not cyclic
or is cyclic of order divisible by $p$.

On the other hand, one may define fields of definition of models of
the same concrete type for a smooth projective curve $\overline{C}$.
For example, if $\overline{C}$ is hyperelliptic, a field $M$ is
called a \emph{hyperelliptic model-field of definition for
$\overline{C}$} if $M$, as a field of definition for $\overline{C}$,
satisfies that $\overline{C}$ is $M$-isomorphic to a hyperelliptic
model of the form $y^2=f(x)$, for some polynomial $f(x)$ of degree
$2g+1$ or $2g+2$. 

By the work of Mestre \cite{Me}, Huggins \cite{Hug2,Hug},
Lercier-Ritzenthaler \cite{LeRi}, Lercier-Ritzenthaler-Sijsling
\cite{LRS} and Lombardo-Lorenzo in \cite{LoLo}, one gets fair-enough
characterizations for the interrelations between the three fields;
the field of moduli, fields of definition and hyperelliptic
model-fields of definition. For instance, {if $\overline{C}$ is
hyperelliptic, then} there are always two of these fields, which are
equal. {Summing up, one obtains the next table issued from
Lercier-Ritzenthaler-Sijsling \cite{LRS}, where $k=F$ is a perfect
field of characteristic $char(F)\neq2$}:
\begin{center}
\begin{tabular}{|c|c|c|c|c|}
\hline $H=\operatorname{Aut}(\overline{C})/\langle\iota\rangle$&
Conditions& Fields of definition =&The field of moduli=\\
&&Hyperelliptic model-fields& A field of definition\\
\hline Not tamely cyclic& & Yes & Yes\\
\hline \multirow{2}{*}{Tamely cyclic with $\# H>1$}&$g$
odd,$\#H$odd&No&Yes\\
&$g$ even or $\#H$ even&Yes&No\\
\hline \multirow{2}{*}{Tamely cyclic with $\#H=1$}&$g$ odd&No&Yes\\
&$g$ even& Yes&No\\
\hline
\end{tabular}
\end{center}
By \emph{tamely cyclic}, we mean that the group is cyclic of order
not divisible by the $char(F)$.

Now, consider a smooth plane curve $\overline{C}$, i.e.
$\overline{C}$ viewed as a smooth curve over $\overline{k}$ admits a
non-singular plane model defined by an equation of the form
$F(X,Y,Z)=0$ in $\mathbb{P}^2_{\overline{k}}$, where $F(X,Y,Z)$ is a
homogenous polynomial of degree $d\geq 4$ over $\overline{k}$ with
$g=\frac{1}{2}(d-1)(d-2)\geq 3$. Similarly, we define a so-called
\emph{plane model-fields of definition for $C$}:
\begin{defn} Given a smooth plane curve $\overline{C}$ over $\overline{k}$, a subfield $M\subset\overline{k}$
 is said to be a \emph{plane model-field of definition for $C$} if and only if the following conditions holds
\begin{enumerate}[(i)]
  \item $M$ is a field of definition for $\overline{C}$.
  \item $\exists$ a smooth curve $C'$ defined over $M$, which is $\overline{k}$-isomorphic to $\overline{C}$,
  and $M$-isomorphic to a non-singular plane model $F(X,Y,Z)=0$, for some homogenous polynomial $F(X,Y,Z)\in M[X,Y,Z]$ of degree $d\geq 3$.
\end{enumerate}
\end{defn}
In this short note, we start with a smooth plane curve
$\overline{C}$ over $\overline{\Q}$ where the field of moduli is not
a field of definition by the work of B. Huggins in \cite{Hug}. Next,
we go further, following the techniques developed in \cite{BBL}, to
construct a twist of $\overline{C}$, for which there is a field of
definition for $\overline{C}$, which is not a plane model-field of
definition.
\subsection*{Acknowledgments} We would like to thank Elisa
Lorenzo and Christophe Ritzenthaler for bringing this problem to our
attention, as a consequence of our discussion with them in
BGSMath-Barcelona Graduate School in March 2017.

\section{The example}
Consider the \emph{Hessian group of order $18$}, denoted by $\operatorname{Hess}_{18}$, which is $\operatorname{PGL}_3(\overline{\mathbb{Q}})$-conjugate to the group generated by
$$S:=\left(
    \begin{array}{ccc}
      1 & 0 & 0 \\
      0 & \zeta_3 & 0 \\
      0 & 0 & \zeta_3^2 \\
    \end{array}
  \right),\,T:=\left(
                 \begin{array}{ccc}
                   0 & 1 & 0 \\
                   0 & 0 & 1 \\
                   1 & 0 & 0 \\
                 \end{array}
               \right),\,\,\text{and}\,\,R:=\left(
                           \begin{array}{ccc}
                             1 & 0 & 0 \\
                             0 & 0 & 1 \\
                             0 & 1 & 0 \\
                           \end{array}
                         \right).
$$
First, we reproduce an example, by B. Huggins in \cite[Chp. 7, \S 2]{Hug}, of a smooth $\overline{\mathbb{Q}}$-plane curve of genus $10$ not definable over its field of moduli, and with full automorphism groups $\operatorname{Hess}_{18}$.
\begin{defn}
A quaternion extension of a field $K$ is a Galois extension $K'/K$
such that $\operatorname{Gal}(K'/K)$ is isomorphic to the quaternion
group of order $8$.
\end{defn}
\begin{defn}(\cite[Lemma 7.2.3]{Hug})
A field $K$ is of level $2$ if $-1$ is not a square in $K$, but it is a sum of two squares in $K$.
\end{defn}
\begin{lem}(\cite[Lemma 7.2.3]{Hug})
Let $K$ be a field of level $2$. Then, for $u,v\in K^*\setminus(K^*)^2$ such that $uv\notin(K^*)^2$, $K(\sqrt{u},\sqrt{v})$ is embeddable into a quaternion extension of $K$ if and only if $-u$ is a norm from $K(\sqrt{-v})$ to $K$ (i.e. $-u=x^2+vy^2$ for some $x,y\in K$).
\end{lem}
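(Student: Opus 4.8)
The plan is to reduce the statement to Witt's classical criterion for the quaternion embedding problem, and then to extract the displayed norm condition from that criterion by a one‑line Hilbert‑symbol computation, in which the hypothesis ``$K$ of level $2$'' is used exactly once.

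I would recall Witt's theorem on the quaternion embedding problem: for $\operatorname{char}K\neq 2$ and $u,v,uv\notin(K^*)^2$, the biquadratic extension $K(\sqrt u,\sqrt v)/K$ embeds into a $Q_8$-extension of $K$ if and only if the ternary quadratic forms $\langle u,v,uv\rangle$ and $\langle 1,1,1\rangle$ are isometric over $K$. Since both forms have square discriminant, a computation of Hasse--Witt invariants shows this is equivalent to $(u,v)_K\,(-1,u)_K\,(-1,v)_K=1$ in $\operatorname{Br}(K)[2]$, i.e.\ to $(u,v)_K=(-1,uv)_K$. I would treat this as a black box: the only part of its proof that is not formal --- the explicit construction of the degree‑$8$ extension $K(\sqrt u,\sqrt v,\sqrt{\gamma})$ out of such an isometry --- is classical, and it is the real work, but it is not specific to our situation.

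Next I would rephrase the right‑hand side of the Lemma. The norm form of $K(\sqrt{-v})/K$ is $a^2+vb^2$, so ``$-u$ is a norm from $K(\sqrt{-v})$ to $K$'' (i.e.\ $-u=x^2+vy^2$) means precisely that the quaternion algebra $(-u,-v)_K$ splits, that is, $(-u,-v)_K=1$ in $\operatorname{Br}(K)$; the degenerate case $-v\in(K^*)^2$, in which both sides hold trivially, needs no separate treatment. Now the hypothesis enters: since $K$ is of level $2$ we may write $-1=a^2+b^2$, which says exactly that $-1$ is a norm from $K(\sqrt{-1})$, hence $(-1,-1)_K=1$. By bilinearity and symmetry of the Hilbert symbol,
\[
(-u,-v)_K=(-1,-1)_K\,(-1,v)_K\,(u,-1)_K\,(u,v)_K=(-1,v)_K\,(-1,u)_K\,(u,v)_K=(-1,uv)_K\,(u,v)_K ,
\]
so $(-u,-v)_K=1$ if and only if $(u,v)_K=(-1,uv)_K$, which by the previous paragraph is exactly the condition that $K(\sqrt u,\sqrt v)$ be embeddable into a quaternion extension of $K$. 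This proves the equivalence.

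The only genuine input is Witt's theorem; everything else is bookkeeping, and the level‑$2$ hypothesis intervenes solely through the identity $(-1,-1)_K=1$ (equivalently, the isotropy of $\langle 1,1,1\rangle$ over $K$), which is precisely what makes the spurious factor $(-1,-1)_K$ disappear. If one prefers to avoid Brauer groups, the same argument runs with quadratic forms: over a level‑$2$ field $\langle 1,1,1\rangle$ is isotropic and $\langle 1,1\rangle\cong\langle -1,-1\rangle$, so $\langle u,v,uv\rangle\cong\langle 1,1,1\rangle$ holds iff $\langle u,v,uv\rangle$ is isotropic; scaling by $u$ this is equivalent to isotropy of $\langle 1,v,uv\rangle$, while scaling $\langle 1,v,u\rangle$ by $v$ shows that isotropy of $\langle 1,v,u\rangle$ --- which is exactly the solvability of $-u=x^2+vy^2$ --- is equivalent to isotropy of $\langle 1,v,uv\rangle$ as well.
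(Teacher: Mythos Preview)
The paper does not supply its own proof of this lemma: it is quoted verbatim from Huggins's thesis \cite[Lemma 7.2.3]{Hug} and used as a black box, so there is no argument in the paper to compare yours against.

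That said, your proof is correct. Witt's criterion $\langle u,v,uv\rangle\cong\langle 1,1,1\rangle$ is the canonical starting point for the $Q_8$-embedding problem, and your Hilbert-symbol bookkeeping is accurate: the Hasse invariant of $\langle u,v,uv\rangle$ is $(u,v)(-1,u)(-1,v)=(u,v)(-1,uv)$, the level-$2$ hypothesis gives $(-1,-1)_K=1$, and the bilinear expansion of $(-u,-v)_K$ then collapses exactly as you wrote. The alternative quadratic-form version at the end is also fine: over a level-$2$ field $\langle 1,1,1\rangle$ is isotropic, so isometry with it is equivalent to isotropy of $\langle u,v,uv\rangle$, and your two scalings by $u$ and by $v$ correctly identify this with isotropy of $\langle 1,v,u\rangle$, i.e.\ solvability of $-u=x^2+vy^2$. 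This is essentially the argument one finds in Huggins (and, before that, in the quadratic-forms literature going back to Witt and Fr\"ohlich), so while the paper omits the proof, yours matches the expected one.
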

For instance, the field $K:=\mathbb{Q}(\zeta_3)$ is of level $2$, since $(\zeta_3^2)^2+\zeta_3^2=-1$ and $\sqrt{-1}\notin K$. It is easily shown that $\pm2$ are not norms from $K(\sqrt{-13})$ to $K$. So neither $K(\sqrt{2},\sqrt{13})$ nor $K(\sqrt{-2},\sqrt{13})$ are embeddable into a quaternion extension of $K$.

Now fix $K$ to be the field $\mathbb{Q}(\zeta_3)$, and define the following:
\begin{eqnarray*}
\phi&:=&XYZ,\\
\psi&:=&X^3+Y^3+Z^3,\\
\chi&:=&(XY)^3+(YZ)^3+(XZ)^3.
\end{eqnarray*}
Suppose that $u,v\in\mathbb{Q}^*$, such that $L:=K(\sqrt{u},\sqrt{v})$ is a $\Z/2\Z\times\Z/2\Z$ extension of $K$ that can not be embedded into a quaternion extension of $K$. Let
\begin{eqnarray*}
c_{\phi^2}&:=&\zeta_3\sqrt{u}+\sqrt{v}+\zeta_3^2\sqrt{uv},\\
c_{\phi\psi}&:=&\zeta_3^2\sqrt{u}+\sqrt{v}+\zeta_3\sqrt{uv},\\
c_{\psi^2}&:=&\sqrt{u}+\sqrt{v}+\sqrt{uv}-\frac{1}{12}.
\end{eqnarray*}
Fix an algebraic closure $\overline{\mathbb{Q}}$ of $\mathbb{Q}$ containing $L$ as above.
\begin{thm}(B. Huggins, \cite[Lemma 7.2.5 and Proposition 7.2.6]{Hug})\label{Hugginexample3}
Following the above notations, let
$$F_{\sqrt{u},\sqrt{v}}(X,Y,Z):=c_{\phi^2}\phi^2-6c_{\phi\psi}\phi\psi-18c_{\psi^2}\psi^2+\chi.$$
Then the equation $F_{\sqrt{u},\sqrt{v}}(X,Y,Z)=0$ such that
$F_{\sqrt{u},\sqrt{v}}(X,1,1)$ is square free, defines a smooth
$\overline{\mathbb{Q}}$-plane curve $\overline{C}$ over
$\overline{\Q}$, with automorphism group $\operatorname{Hess}_{18}$.
The field of moduli $K_{\overline{C}}$ is $K=\mathbb{Q}(\zeta_3)$,
but it is not a field of definition.
\end{thm}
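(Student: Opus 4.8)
Following Huggins, I would establish the four assertions --- smoothness, the automorphism group, that $K_{\overline{C}}=K=\Q(\zeta_3)$, and that $K$ is not a field of definition --- in turn.

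\emph{Smoothness and the automorphism group.} One verifies directly that $\phi,\psi,\chi$ (hence $\phi^2,\phi\psi,\psi^2,\chi$) are genuine invariants of $S$, $T$ and $R$, so $F_{\sqrt u,\sqrt v}$ is $\operatorname{Hess}_{18}$-invariant; since automorphisms of a smooth plane curve of degree $\geq 4$ are linear, this gives $\operatorname{Hess}_{18}\hookrightarrow\operatorname{Aut}(\overline{C})$ and, the curve being a sextic, $g(\overline{C})=\tfrac12(6-1)(6-2)=10$. For smoothness I would compute the discriminant of the four-dimensional linear system $\langle\phi^2,\phi\psi,\psi^2,\chi\rangle$: after noting $\overline{C}$ avoids the coordinate vertices, the only way a member degenerates is detected by its restriction to a line such as $\{Y=Z\}$, whose dehomogenisation is $F_{\sqrt u,\sqrt v}(X,1,1)$, so this polynomial being square free forces $\overline{C}$ smooth. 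For $\operatorname{Aut}(\overline{C})\subseteq\operatorname{Hess}_{18}$ I would use the classification of automorphism groups of smooth plane sextics: each of the finitely many groups strictly containing a $\operatorname{PGL}_3$-conjugate of $\operatorname{Hess}_{18}$ imposes explicit polynomial constraints on $c_{\phi^2},c_{\phi\psi},c_{\psi^2}$, and one checks these fail here --- this is where the term $-\tfrac1{12}$ in $c_{\psi^2}$ and the $\zeta_3$-versus-$\zeta_3^2$ asymmetry between $c_{\phi^2}$ and $c_{\phi\psi}$ are used --- giving $\operatorname{Aut}(\overline{C})=\operatorname{Hess}_{18}$.

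\emph{The field of moduli.} As $\phi,\psi,\chi\in\Q[X,Y,Z]$ and the $c$'s lie in $L=K(\sqrt u,\sqrt v)$, only $\sigma\in\operatorname{Gal}(L/K)\cong\Z/2\Z\times\Z/2\Z$ move $\overline{C}$, acting by sign changes on $\sqrt u,\sqrt v,\sqrt{uv}$. The group $\operatorname{Hess}_{72}$ normalises $\operatorname{Hess}_{18}$ with $\operatorname{Hess}_{72}/\operatorname{Hess}_{18}\cong\Z/2\Z\times\Z/2\Z$ and acts on $\langle\phi^2,\phi\psi,\psi^2,\chi\rangle$ through this quotient; the point of the construction is that in a suitable eigenbasis of that space $F_{\sqrt u,\sqrt v}$ has coefficients $1,\sqrt u,\sqrt v,\sqrt{uv}$, on which $\operatorname{Gal}(L/K)$ and $\operatorname{Hess}_{72}/\operatorname{Hess}_{18}$ act by the same sign characters. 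Hence for each nontrivial $\sigma$ there is $M_\sigma\in\operatorname{Hess}_{72}$ and a scalar $\la_\sigma$ with $F_{\sqrt u,\sqrt v}^{\,\sigma}=\la_\sigma\,(F_{\sqrt u,\sqrt v}\circ M_\sigma)$, so $\overline{C}^{\,\sigma}\cong\overline{C}$ and $K_{\overline{C}}\subseteq K$. To rule out $K_{\overline{C}}=\Q$, I would check that complex conjugation $\tau$ (taking $u,v>0$) does not stabilise $\overline{C}$: it interchanges $c_{\phi^2}$ with $c_{\phi\psi}$ and fixes $c_{\psi^2}$, and a finite search over $N_{\operatorname{PGL}_3}(\operatorname{Hess}_{18})/\operatorname{Hess}_{18}$ shows no linear substitution restores the equation; thus $K_{\overline{C}}\not\subseteq\R$, and being a subfield of the quadratic field $K$ it equals $K$.

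\emph{$K$ is not a field of definition.} By Weil's cocycle criterion it suffices to show no descent datum exists. Since $\operatorname{Aut}(\overline{C})=\operatorname{Hess}_{18}$ is defined over $\Q$, every isomorphism $\overline{C}^{\,\sigma}\xrightarrow{\ \sim\ }\overline{C}$ lies in $N:=N_{\operatorname{PGL}_3}(\operatorname{Hess}_{18})$, and the ones found above lie in $\operatorname{Hess}_{72}$; reducing modulo $\operatorname{Hess}_{18}$ yields the surjection $\operatorname{Gal}(\overline{\Q}/K)\twoheadrightarrow\operatorname{Gal}(L/K)\cong\operatorname{Hess}_{72}/\operatorname{Hess}_{18}$. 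Upgrading these isomorphisms to an honest cocycle into $\operatorname{Hess}_{18}$ is obstructed; as the normal subgroup $(\Z/3\Z)^2\triangleleft\operatorname{Hess}_{18}$ contributes nothing $2$-primary, the obstruction reduces to the embedding problem for the central extension $1\to Z(Q_8)\to Q_8\to\Z/2\Z\times\Z/2\Z\to1$ determined by the copy of $Q_8$ in $\operatorname{Hess}_{72}$ --- equivalently, to whether $L=K(\sqrt u,\sqrt v)$ embeds into a quaternion extension of $K$. By the standing hypothesis on $u,v$ (e.g.\ $u=2,v=13$, by the Lemma above) it does not, so there is no descent datum and $K$ is not a field of definition. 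The main obstacle is exactly this last identification: one must prove the field-of-definition obstruction is precisely the $Q_8$-embedding class and does not collapse for an incidental reason, which is what makes the earlier points --- $\operatorname{Aut}(\overline{C})$ equal to $\operatorname{Hess}_{18}$, the tower $\operatorname{Hess}_{18}\triangleleft\operatorname{Hess}_{72}\triangleleft\operatorname{Hess}_{216}$, and the matching of Galois with $\operatorname{Hess}_{72}/\operatorname{Hess}_{18}$ on the invariants --- essential rather than cosmetic.
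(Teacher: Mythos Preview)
The paper does not prove this theorem at all: it is stated with attribution to Huggins (her Lemma~7.2.5 and Proposition~7.2.6) and followed immediately by a remark, with no proof environment in between. So there is nothing in the paper to compare your sketch against; the authors treat the result as a black box imported from \cite{Hug}.

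That said, your outline does follow the architecture of Huggins' original argument --- constructing $F_{\sqrt u,\sqrt v}$ from $\operatorname{Hess}_{18}$-invariants, matching $\operatorname{Gal}(L/K)\cong(\Z/2\Z)^2$ with $\operatorname{Hess}_{72}/\operatorname{Hess}_{18}$ acting on the invariant pencil to get $K_{\overline{C}}\subseteq K$, and then identifying the Weil descent obstruction with the $Q_8$-embedding problem for $K(\sqrt u,\sqrt v)/K$, which fails by hypothesis. Two places where your sketch is thinner than a proof would need to be: your smoothness claim that any singularity is detected on the line $\{Y=Z\}$ is not justified (an $\operatorname{Hess}_{18}$-orbit of singular points need not meet that line; Huggins' treatment here is more directly computational), and your argument that $K_{\overline{C}}\not\subset\R$ via complex conjugation silently assumes $u,v>0$, which holds in the paper's example $(u,v)=(2,13)$ but is not among the stated hypotheses on $u,v$.
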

\begin{rem}
The condition that $F_{\sqrt{u},\sqrt{v}}(X,1,1)$ is square free is possible. For example, with $u=2$ and $v=13$, the resultant of $F_{\sqrt{2},\sqrt{13}}(X,1,1)$ and $\frac{\partial F}{\partial X}(X,1,1)$ is not zero.
\end{rem}
\begin{lem}\label{samefldmoduli}
Let $\overline{C}$ be a smooth curve defined over an algebraically
closed field $\overline{k}$, with $F=k$ and $k$ perfect. An
$\overline{k}$-isomorphism $\phi:\overline{C'}\rightarrow
\overline{C}$ does not change the field of moduli or fields of
definition, that is both $\overline{C}$ and $\overline{C'}$ have the
same fields of moduli and fields of definitions.
\end{lem}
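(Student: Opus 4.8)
The plan is to establish both invariance statements directly from the defining universal properties, since an $\overline{k}$-isomorphism $\phi\colon \overline{C'}\to\overline{C}$ sets up a bijection between descent data for the two curves. First I would handle \emph{fields of definition}: suppose $L\subseteq\overline{k}$ is a field of definition for $\overline{C}$, so there is a curve $C_0/L$ with an $\overline{k}$-isomorphism $\mu\colon C_0\times_L\overline{k}\xrightarrow{\ \sim\ }\overline{C}$. Then $\phi^{-1}\circ\mu\colon C_0\times_L\overline{k}\to\overline{C'}$ is an $\overline{k}$-isomorphism, so the \emph{same} model $C_0/L$ witnesses that $L$ is a field of definition for $\overline{C'}$. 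By symmetry (replace $\phi$ by $\phi^{-1}$), the two curves have exactly the same set of fields of definition; intersecting over this common set gives $K_{\overline{C}}=K_{\overline{C'}}$, which is the statement about the field of moduli. This already proves the lemma, and it does not even use perfectness of $k$.

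If one prefers (or if the paper's intended argument) to phrase the field-of-moduli part intrinsically via Galois descent rather than as a consequence of the fields-of-definition part, the alternative is as follows. Fix a separable (here algebraic, since $k$ is perfect so $\overline{k}=k^{\mathrm{sep}}$) closure and let $G=\operatorname{Gal}(\overline{k}/k)$. Recall that for a curve $\overline{C}/\overline{k}$ one has the subgroup $U_{\overline{C}}=\{\sigma\in G : \overline{C}^{\sigma}\cong_{\overline{k}}\overline{C}\}$, and $K_{\overline{C}}=\overline{k}^{\,U_{\overline{C}}}$ is the fixed field. The key step is then to check $U_{\overline{C}}=U_{\overline{C'}}$: given $\sigma\in G$, applying $\sigma$ to the coefficients of the isomorphism $\phi$ produces an $\overline{k}$-isomorphism $\phi^{\sigma}\colon \overline{C'}^{\,\sigma}\to\overline{C}^{\,\sigma}$, so $\overline{C}^{\,\sigma}\cong_{\overline{k}}\overline{C}$ if and only if $\overline{C'}^{\,\sigma}\cong_{\overline{k}}\overline{C'}$ (chaining $\phi$, the assumed isomorphism $\overline{C}^{\sigma}\cong\overline{C}$, and $(\phi^\sigma)^{-1}$, and conversely). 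Hence the two stabilizer subgroups coincide and therefore so do their fixed fields.

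I expect no serious obstacle here; the only points requiring a little care are bookkeeping ones. One should be explicit that $\overline{k}$-isomorphisms compose and invert to $\overline{k}$-isomorphisms, so that ``being a field of definition'' transports along $\phi$ in both directions; and, for the intrinsic formulation, one must be careful that the Galois-conjugate $\phi^\sigma$ really is an isomorphism between the conjugated curves, which is just functoriality of base change by $\operatorname{Spec}\sigma$. The hypothesis that $k$ is perfect is used only to identify $\overline{k}$ with a Galois (separable) closure of $k$ so that the Galois-theoretic description of the field of moduli applies; the fields-of-definition argument in the first paragraph needs nothing beyond the definitions, and I would present that as the main proof, remarking that the field-of-moduli assertion is the immediate corollary obtained by intersecting the (identical) families of fields of definition.
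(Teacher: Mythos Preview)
Your proposal is correct and your main argument (the first paragraph) is essentially identical to the paper's proof: compose a given $\overline{k}$-isomorphism $\psi:C''\times_L\overline{k}\to\overline{C}$ with $\phi^{-1}$ to witness that $L$ is also a field of definition for $\overline{C'}$, argue the converse by symmetry, and deduce equality of fields of moduli by intersecting. Your alternative Galois-theoretic formulation is extra and not in the paper, but it is also correct.
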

\begin{proof}
A field $L\subseteq\overline{k}$ is a field of definition for
$\overline{C}$ if and only if there exists a smooth curve $C''$ over
$L$, such that $C''\times_L\overline{k}$ is
$\overline{k}$-isomorphic to $\overline{C}$ through some
$\psi:C''\times_L\overline{k}\rightarrow \overline{C}$. Hence
$\phi^{-1}\circ\psi:C''\times_L\overline{k}$ is a
$\overline{k}$-isomorphism, and $L$ is a field of definition for
$\overline{C'}$. The converse is true by a similar discussion.
Consequently, the field of moduli for $\overline{C}$ and
$\overline{C'}$ coincides, being the intersection of all fields of
definition.
\end{proof}
\begin{cor}\label{twisting}
Consider a smooth $\overline{\mathbb{Q}}$-plane curve $\overline{C}$
defined by an equation of the form
$$\frac{c_{\phi^2}}{p^2}(XYZ)^2-\frac{6c_{\phi\psi}}{p}(XYZ)(X^3+\frac{1}{p}Y^3+\frac{1}{p^2}Z^3)-18c_{\psi^2}(X^3+\frac{1}{p}Y^3+\frac{1}{p^2}Z^3)^2+\frac{1}{p}X^3Y^3+\frac{1}{p^3}(YZ)^3+\frac{1}{p^2}X^3Z^3=0,$$
where $p\in\mathbb{Q}$, in particular $\overline{C}$ admits
$\Q(\sqrt{u},\sqrt{v},\zeta_3)$ as a plane model-field of definition
for $\overline{C}$. Then $\operatorname{Aut}(\overline{C})$ is
isomorphic to $\operatorname{Hess}_{18}$. Moreover, the field of
moduli $K_{\overline{C}}$ is $K=\mathbb{Q}(\zeta_3)$, but it is not
a field of definition.
\end{cor}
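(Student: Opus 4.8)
The plan is to recognize the displayed equation as a $\operatorname{PGL}_3(\overline{\mathbb{Q}})$-transform of Huggins' curve and then invoke Theorem \ref{Hugginexample3} and Lemma \ref{samefldmoduli}. Write $\overline{C}_0$ for the smooth plane curve $F_{\sqrt u,\sqrt v}(X,Y,Z)=0$ of Theorem \ref{Hugginexample3}, fix a cube root $\lambda:=\sqrt[3]{p}\in\overline{\mathbb{Q}}$ of $p$, and let $g\in\operatorname{PGL}_3(\overline{\mathbb{Q}})$ be the coordinate change $[X:Y:Z]\mapsto[X:\lambda Y:\lambda^2 Z]$, so that $g(\overline{C}_0)$ is cut out by $F_{\sqrt u,\sqrt v}(X,Y/\lambda,Z/\lambda^2)$. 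First I would carry out this substitution block by block, using $\phi=XYZ\mapsto XYZ/\lambda^3=XYZ/p$, $\psi=X^3+Y^3+Z^3\mapsto X^3+\frac{1}{p}Y^3+\frac{1}{p^2}Z^3$, and $\chi=(XY)^3+(YZ)^3+(XZ)^3\mapsto\frac{1}{p}(XY)^3+\frac{1}{p^3}(YZ)^3+\frac{1}{p^2}(XZ)^3$; collecting the four terms $c_{\phi^2}\phi^2$, $-6c_{\phi\psi}\phi\psi$, $-18c_{\psi^2}\psi^2$, $\chi$ reproduces exactly the polynomial in the statement. Hence the curve $\overline{C}$ of the corollary equals $g(\overline{C}_0)$; in particular $g$ restricts to a $\overline{\mathbb{Q}}$-isomorphism $\overline{C}_0\to\overline{C}$, so $\overline{C}$ is smooth whenever the square-free hypothesis of Theorem \ref{Hugginexample3} holds for $F_{\sqrt u,\sqrt v}$.

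Granting this identification, the asserted properties follow at once. Conjugation by $g$ yields a group isomorphism $\operatorname{Aut}(\overline{C})\cong\operatorname{Aut}(\overline{C}_0)\cong\operatorname{Hess}_{18}$. Applying Lemma \ref{samefldmoduli} to the $\overline{\mathbb{Q}}$-isomorphism $g^{-1}\colon\overline{C}\to\overline{C}_0$ shows that $\overline{C}$ and $\overline{C}_0$ have the same field of moduli and the same fields of definition; thus $K_{\overline{C}}=K_{\overline{C}_0}=\mathbb{Q}(\zeta_3)$ by Theorem \ref{Hugginexample3}, and $\mathbb{Q}(\zeta_3)$ fails to be a field of definition for $\overline{C}$ because it fails to be one for $\overline{C}_0$. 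For the parenthetical claim, note that $c_{\phi^2},c_{\phi\psi},c_{\psi^2}$ lie in $\mathbb{Q}(\zeta_3,\sqrt u,\sqrt v)$ (since $\sqrt{uv}=\sqrt u\,\sqrt v$) and $p\in\mathbb{Q}^*$, so all coefficients of the displayed equation lie in $L':=\mathbb{Q}(\sqrt u,\sqrt v,\zeta_3)$; letting $C'/L'$ be the plane curve defined over $L'$ by that very equation, one has $C'\times_{L'}\overline{\mathbb{Q}}=\overline{C}$ and $C'$ is non-singular, so $L'$ is a field of definition of $\overline{C}$ realized by a non-singular plane model, i.e. a plane model-field of definition.

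I do not expect a genuine obstacle here: the actual work is the bookkeeping in the substitution step — checking that each of the four blocks picks up exactly the power of $1/p$ displayed — together with the standard facts that an element of $\operatorname{PGL}_3(\overline{\mathbb{Q}})$ induces a $\overline{\mathbb{Q}}$-isomorphism of the associated plane curves and conjugates their automorphism groups, which is what lets Lemma \ref{samefldmoduli} and Theorem \ref{Hugginexample3} be quoted verbatim. The one point worth stating carefully is that the square-free condition guaranteeing smoothness in Theorem \ref{Hugginexample3} must be assumed here as well; under it, smoothness of $\overline{C}$ is automatic from the isomorphism and needs no separate verification in the new coordinates.
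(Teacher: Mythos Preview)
Your proof is correct and follows exactly the paper's approach: recognize $\overline{C}$ as the image of Huggins' curve $F_{\sqrt u,\sqrt v}=0$ under the diagonal change of variables $\operatorname{diag}(1,1/\sqrt[3]{p},1/\sqrt[3]{p^2})$, deduce that the automorphism groups are conjugate, and invoke Lemma~\ref{samefldmoduli} together with Theorem~\ref{Hugginexample3}. Your version is in fact more explicit than the paper's, since you verify the substitution term by term and spell out why $\mathbb{Q}(\sqrt u,\sqrt v,\zeta_3)$ is a plane model-field of definition.
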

\begin{proof}
Since $\overline{C}$ is $\mathbb{Q}(\sqrt[3]{p})$-isomorphic to
$F_{\sqrt{u},\sqrt{v}}(X,Y,Z)=0$ through a change of variables of
the shape
$\phi=\operatorname{diag}(1,1/\sqrt[3]{p},1/\sqrt[3]{p^2})$,
therefore they have conjugate automorphism groups. Moreover, fields
of definition and the field of moduli of both curves are the same by
Lemma \ref{samefldmoduli}. Consequently, the field of moduli
$K_{\overline{C}}$ is $K=\mathbb{Q}(\zeta_3)$, but it is not a field
of definition, using Theorem \ref{Hugginexample3}.
\end{proof}
\begin{thm}\label{examabst4} Consider the family $\mathcal{C}_{p}$ of smooth plane curves
over the plane model-field of definition
$L=\mathbb{Q}(\sqrt{u},\sqrt{v},\zeta_3)$ given by an equation of
the form
$$\frac{c_{\phi^2}}{p^2}(XYZ)^2-\frac{6c_{\phi\psi}}{p}(XYZ)(X^3+\frac{1}{p}Y^3+\frac{1}{p^2}Z^3)-18c_{\psi^2}(X^3+\frac{1}{p}Y^3+\frac{1}{p^2}Z^3)^2+\frac{1}{p}X^3Y^3+\frac{1}{p^3}(YZ)^3+\frac{1}{p^2}X^3Z^3=0,$$
where $p$ is a prime integer such that $p\equiv 3$ or $5$ mod $7$.
Given a smooth plane curve $C$ over $L$ in $\mathcal{C}_{p}$, then
there exists a twist $C'$ of $C$ over $L$ which does not have $L$ as a plane model-field of definition.
Moreover, the field of moduli of $C'$ is $\mathbb{Q}(\zeta_3)$, and is not a field of definition for $C'$.
\end{thm}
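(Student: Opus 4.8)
The plan is to reduce everything but one point of the statement to results already proved, and to concentrate the work on producing a single bad twist.

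First observe that the two ``moreover'' assertions are automatic. Any twist $C'$ of $C$ over $L$ is $\overline{\Q}$-isomorphic to $C$, hence by Lemma \ref{samefldmoduli} it has the same field of moduli and the same fields of definition as $C$; by Corollary \ref{twisting} these are $\Q(\zeta_3)$ and a family of fields not containing $\Q(\zeta_3)$. So the field of moduli of \emph{every} twist of $C$ over $L$ equals $\Q(\zeta_3)$ and is never a field of definition, and the only genuine task is to exhibit one twist $C'/L$ of $C$ that is not $L$-isomorphic to any non-singular plane curve over $L$.

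For this I would use the description of twists of smooth plane curves from \cite{BBL}. Since $\deg C=6\ge 4$, the plane model of $C$ is unique up to $\operatorname{PGL}_3$, the twists of $C/L$ are classified by $H^{1}(G_L,\operatorname{Aut}_{\overline L}(C_{\overline L}))$ with $G_L=\operatorname{Gal}(\overline L/L)$, and a twist $C_\xi$ admits a non-singular plane model over $L$ if and only if the image of $[\xi]$ under the connecting map
\[
\Delta\colon H^{1}(G_L,\operatorname{Aut}(\overline C))\longrightarrow H^{2}(G_L,\overline L^{\times})=\operatorname{Br}(L)
\]
of $1\to\overline L^{\times}\to\widetilde{\operatorname{Aut}}(\overline C)\to\operatorname{Aut}(\overline C)\to 1$ vanishes; here $\widetilde{\operatorname{Aut}}(\overline C)\subseteq\operatorname{GL}_3(\overline L)$ is the preimage of $\operatorname{Aut}(\overline C)$, and the equivalence rests on Hilbert~90 ($H^{1}(G_L,\operatorname{GL}_3)=1$) together with the compatibility $\Delta=\delta\circ\iota_*$ with the connecting map $\delta$ of $1\to\overline L^{\times}\to\operatorname{GL}_3\to\operatorname{PGL}_3\to 1$. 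Thus it suffices to produce a cocycle $\xi$ with $\Delta([\xi])\neq 0$ in $\operatorname{Br}(L)$.

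Next I would make $\operatorname{Aut}(\overline C)$ explicit. By Corollary \ref{twisting}, $\operatorname{Aut}(\overline C)=\phi^{-1}\operatorname{Hess}_{18}\,\phi$ with $\phi=\operatorname{diag}(1,p^{-1/3},p^{-2/3})$; writing $s,t,r$ for the classes of $\phi^{-1}S\phi$, $\phi^{-1}T\phi$, $\phi^{-1}R\phi$, one checks that $s=S$, that $t$ has the $\Q$-rational representative $\tilde t:=\left(\begin{smallmatrix}0&1&0\\ 0&0&1\\ p&0&0\end{smallmatrix}\right)$, that $\langle s,t\rangle\cong(\Z/3\Z)^{2}$ is pointwise $G_L$-fixed, and that $r$ is rational only over $L(p^{1/3})$, with $\sigma(r)=s^{-1}r$ for a generator $\sigma$ of $\operatorname{Gal}(L(p^{1/3})/L)\cong\Z/3\Z$. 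The crucial algebraic fact is $\tilde t^{\,3}=pI$ in $\operatorname{GL}_3$ (whereas $\tilde s^{\,3}=I$ and $[\tilde t,\tilde s]=\zeta_3 I$). I would then take $\xi_\sigma=t^{\chi(\sigma)}$ for a cubic character $\chi\colon G_L\to\Z/3\Z$ to be chosen: this is a cocycle since $s,t$ commute, $t$ is $G_L$-fixed and $t^{3}=1$ in $\operatorname{PGL}_3$. Lifting $\xi_\sigma$ to $\tilde t^{\,\widetilde\chi(\sigma)}$ with $\widetilde\chi(\sigma)\in\{0,1,2\}$ and using only $\tilde t^{\,3}=pI$, the obstruction $2$-cocycle is $(\sigma,\tau)\mapsto p^{(\widetilde\chi(\sigma)+\widetilde\chi(\tau)-\widetilde\chi(\sigma\tau))/3}$, so $\Delta([\xi])=(a,p)_3\in\operatorname{Br}(L)$, where $L(a^{1/3})$ is the cubic field cut out by $\chi$. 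The choice $\chi=\chi_0$ (with $L(a^{1/3})=L(p^{1/3})$) is useless, since $(p,p)_3=(p,-1)_3=0$; instead I would take $\chi$ cutting out the cubic subextension of $L(\zeta_7)/L$, so that $a$ is supported on the primes of $\Z[\zeta_3]$ above $7$ and the local invariant of $(a,p)_3$ at a place above $7$ is the cube-residue symbol of $p$ modulo $7$. Since the hypothesis $p\equiv 3,5\pmod 7$ makes $p$ a primitive root mod $7$ — in particular neither a square nor a cube — this invariant is nonzero, hence $\Delta([\xi])\neq 0$. Therefore $C'=C_\xi$ is the required twist: a curve over $L$ with no non-singular plane model over $L$, whose field of moduli is $\Q(\zeta_3)$ and is not a field of definition for it.

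The step I expect to be the main obstacle is the final Brauer-group computation. One must set up $\widetilde{\operatorname{Aut}}(\overline C)$ together with its $G_L$-action carefully enough to be sure the obstruction class is exactly $(a,p)_3$ — verifying $\tilde t^{\,3}=pI$, keeping $\xi$ inside the $G_L$-fixed subgroup $\langle t\rangle$ so that the non-$G_L$-fixed generator $r$ contributes nothing and there is no commutator term — and then identify the correct cubic character $\chi$ and prove that $(a,p)_3$ is genuinely non-split \emph{over $L$}: since $7$ splits in $\Q(\zeta_3)$, one has to control the restriction from $\Q(\zeta_3)$ to $L$ and make sure that the local invariants at the two primes above $7$ do not cancel. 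This is precisely where the prime $7$ and the congruence on $p$ enter.
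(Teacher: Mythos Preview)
Your approach is essentially the paper's: the same cocycle (the paper writes it as $(\sigma,\mathrm{id})\mapsto [Y:Z:pX]$, which is exactly your $t$ with lift $\tilde t$ and $\tilde t^{\,3}=pI$), the same cyclic cubic extension $L(\cos(2\pi/7))/L$, and the same appeal to the obstruction criterion of \cite{BBL} for the existence of a plane model. The reduction of the ``moreover'' clause to Lemma~\ref{samefldmoduli} and Corollary~\ref{twisting} is also identical.

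The one place you diverge is the endgame, and the paper's route is cleaner than yours. Instead of computing local invariants of $(a,p)_3$ at the primes of $L$ above $7$ and worrying about cancellation, the paper observes that the obstruction is nonzero iff $p$ is not a norm from $L(\cos(2\pi/7))$ to $L$, and then argues over~$\Q$: the hypothesis $p\equiv 3,5\pmod 7$ makes $p$ inert in $\Q(\cos(2\pi/7))$, hence not a norm from $\Q(\cos(2\pi/7))$ to $\Q$; since $\Q(\cos(2\pi/7))$ and $L$ are linearly disjoint with $\gcd([L:\Q],3)=1$, this forces $p$ to be a non-norm from $L(\cos(2\pi/7))$ to $L$ as well (equivalently, restriction $\operatorname{Br}(\Q)[3]\to\operatorname{Br}(L)[3]$ is injective because corestriction--restriction is multiplication by $[L:\Q]$). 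This sidesteps entirely the delicate point you flag about the two primes above $7$, and it is the step you should swap in.
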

\begin{proof}
Consider the Galois extension 
$M'/L$ with $M'=L(cos(2\pi/7),\sqrt[3]{p})$, where all the
automorphisms of $\overline{C}:=C\times_L\overline{\mathbb{Q}}$ are
defined.
Let $\sigma$ be a generator of the cyclic 
Galois group $\text{Gal}(L(cos(2\pi/7))/L)$. We define a 1-cocycle
on $\text{Gal}(M'/L)\cong \text{Gal}(L(cos(2\pi/7))/L)\times
\text{Gal}(L(\sqrt[3]{p})/L)$ to $\text{Aut}(\overline{C})$ by
mapping $(\sigma,id)\mapsto [Y:Z:p X]$ and $(id,\tau)\mapsto id$.
This defines an element of
$\text{H}^1(\text{Gal}(M'/L),\text{Aut}(\overline{C}))$, coming from
the inflation of an element in
$\text{H}^1(\text{Gal}(L(\cos(2\pi/7))/L),\text{Aut}(\overline{C}))^{Gal(M'/L(cos(2\pi/7)))})$.

This $1$-cocycle is trivial if and only if $p$ is a norm of an
element of $L(\cos(2\pi/7)$ over $L$. However, this is not the case,
since $\Q(\cos(2\pi/7))$ and $L$ are disjoint with $[L:\Q]$ and
$[\Q(\cos(2\pi/7)):\Q]$ coprime, and moreover $p$ is not a norm of
an element of $\Q(\cos(2\pi/7))$ over $\Q$ being inert by our
assumption. Consequently, the twist $C'$ is not $L$-isomorphic to a
non-singular plane model in $\mathbb{P}^2_L$ by \cite[Theorem
4.1]{BBL}. That is, $L$ is not a plane model-field of definition for
$C'$. The last sentence in the theorem follows by Lemma
\ref{samefldmoduli} and Corollary \ref{twisting}.
\end{proof}

\begin{rem}
By our work in \cite{BBL}, we know that a non-singular plane model of $C'$ exists over at least a degree degree $3$ extension of $L$.
\end{rem}


\end{document}